\documentclass[12pt,reqno]{article}

\usepackage[usenames]{color}
\usepackage{amssymb}
\usepackage{graphicx}
\usepackage{amscd}

\usepackage{amsthm}
\newtheorem{theorem}{Theorem}

\newtheorem{proposition}[theorem]{Proposition}
\newtheorem{corollary}[theorem]{Corollary}

\theoremstyle{definition}

\usepackage[colorlinks=true,
linkcolor=webgreen, filecolor=webbrown,
citecolor=webgreen]{hyperref}

\definecolor{webgreen}{rgb}{0,.5,0}
\definecolor{webbrown}{rgb}{.6,0,0}

\usepackage{color}

\usepackage{float}

\usepackage{graphics,amsmath,amssymb}
\usepackage{amsfonts}
\usepackage{latexsym}
\usepackage{epsf}

\setlength{\textwidth}{6.5in} \setlength{\oddsidemargin}{.1in}
\setlength{\evensidemargin}{.1in} \setlength{\topmargin}{-.5in}
\setlength{\textheight}{8.9in}

\newcommand{\seqnum}[1]{\href{http://www.research.att.com/cgi-bin/access.cgi/as/~njas/sequences/eisA.cgi?Anum=#1}{\underline{#1}}}

\begin{document}

\begin{center}
\vskip 1cm{\LARGE\bf On the restricted Chebyshev-Boubaker polynomials} \vskip 1cm \large
Paul Barry\\
School of Science\\
Waterford Institute of Technology\\
Ireland\\
\href{mailto:pbarry@wit.ie}{\tt pbarry@wit.ie}
\end{center}
\vskip .2 in

\begin{abstract} Using the language of Riordan arrays, we study a one-parameter family of orthogonal polynomials that we call the restricted Chebyshev-Boubaker polynomials. We characterize these polynomials in terms of the three term recurrences that they satisfy, and we study certain central sequences defined by their coefficient arrays. We give an integral representation for their moments, and we show that the Hankel transforms of these moments have a simple form. We show that the (sequence) Hankel transform of the row sums of the corresponding moment matrix is defined by a family of polynomials closely related to the Chebyshev polynomials of the second kind, and that these row sums are in fact the moments of another family of orthogonal polynomials. \end{abstract}

\section{Introduction}

In this paper, we shall explore a number of polynomial sequences arising from a study of the one-parameter family of orthogonal polynomials defined by the Riordan array
$$\left(\frac{1+rx^2}{1+x^2}, \frac{x}{1+x^2}\right).$$ Elements of this family may be regarded as generalized scaled Chebyshev polynomials of the second kind.
We use ideas from the theory of the Riordan group of lower-triangular matrices \cite{SGWW} to help us derive our results. General information on orthogonal polynomials as used in this note may be found in, for instance, \cite{Chihara, Gautschi, Szego}. We will find it useful to give continued fraction expressions for  generating functions encountered in this note \cite{Wall}, particularly when we want to characterize Hankel transforms \cite{Kratt} of sequences. In the next section, we give a brief overview of Riordan arrays, and their links to orthogonal polynomials. In the following section, we concentrate on the coefficient array $(P_{n,k})$ of the orthogonal polynomials, and in particular we study properties of the sequences $P_{2n,n}$ and $P_{2n,n+1}$. In this context, we note that the Boubaker polynomials (the case $r=3$ of the family parameter) have the special property $P_{2n,n}=0^n$.
In the next and final section, we look at the moment matrix (the inverse of the coefficient array), its first column, which is the moment sequence of the family of orthogonal polynomials under study, and the row sums of the moment matrix. We give integral representations of these sequences, and we characterize their Hankel transforms. We find that this family of orthogonal polynomials has an interesting property: the Hankel transform of the row sums of the moment matrix is expressible in terms of suitably scaled and shifted Chebyshev polynomials of the second kind.

\section{Riordan arrays and orthogonal polynomials}
The group of Riordan arrays $\mathcal{R}$ \cite{SGWW}  was first introduced by Shapiro, Getu, Woan, and Woodson in the early $1990$'s. Since then, they have been extensively studied and applied in a number of different fields.
At its simplest, a Riordan array is formally defined by a pair of power series, say $g(x)$ and $f(x)$, where $g(0)=1$ and $f(x)=x+a_2 x^2+a_3x^2+\ldots$, with integer coefficients (such Riordan arrays are called ``proper'' Riordan arrays). The pair $(g, f)$ is then associated to the lower-triangular invertible matrix whose $(n,k)$-th element $T_{n,k}$ is given by
$$T_{n,k}=[x^n] g(x)f(x)^k.$$

In this paper, we shall define a one-parameter family of orthogonal polynomials using Riordan arrays, and we shall investigate a number of aspects of these orthogonal polynomials, notably the central sequences defined by their coefficient array, and the Hankel transform \cite{Layman} of their moment sequence. We recall that for a sequence $a_n$ we define its \emph{Hankel transform} to be the sequence of determinants $h_n=|a_{i+j}|_{0 \le i,j \le n}$.

All the power series and matrices that we shall look at are assumed to have integer coefficients. Thus power series are elements of $\mathbb{Z}[[x]]$. The generating function $1$ generates the sequence that we denote by $0^n$, which begins $1,0,0,0,\ldots$. All matrices are assumed to begin at the $(0,0)$ position, and to extend infinitely to the right and downwards. Thus matrices in this article are elements of $\mathbb{Z}^{\mathbb{N}_0 \times \mathbb{N}_0}$. When examples are given, an obvious truncation is applied.

The Fundamental Theorem of Riordan arrays \cite{Survey} says that the action of a Riordan array on a power series, namely
$$ (g(x), f(x))\cdot a(x)= g(x)a(f(x)),$$ is realised in matrix form by
$$\left(T_{n,k}\right)\left( \begin{array}{c}a_0\\a_1\\a_2\\a_3\\ \vdots\end{array}\right)
=\left( \begin{array}{c}b_0\\b_1\\b_2\\b_3\\ \vdots\end{array}\right),$$ where
the power series $a(x)$ expands to give the sequence $a_0, a_1, a_2, \ldots$, and the image sequence $b_0, b_1, b_2, \ldots$ has generating function $g(x)a(f(x))$.

An important feature of Riordan arrays is that they have a number of sequence characterizations \cite{Cheon, He}. The simplest of
these
is as follows.
\begin{proposition} \label{Char} \cite[Theorem 2.1, Theorem 2.2]{He} Let $D=[d_{n,k}]$ be an infinite triangular matrix. Then $D$ is a Riordan array if and only if there
exist two sequences $A=[a_0,a_1,a_2,\ldots]$ and $Z=[z_0,z_1,z_2,\ldots]$ with $a_0 \neq 0$, $z_0 \neq 0$ such that
\begin{itemize}
\item $d_{n+1,k+1}=\sum_{j=0}^{\infty} a_j d_{n,k+j}, \quad (k,n=0,1,\ldots)$
\item $d_{n+1,0}=\sum_{j=0}^{\infty} z_j d_{n,j}, \quad (n=0,1,\ldots)$.
\end{itemize}
\end{proposition}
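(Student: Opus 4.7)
The plan is to establish both directions of the equivalence by translating between the pair of power series $(g,f)$ and the sequences $(A,Z)$ via formal generating-function manipulations, working with column generating functions $g_k(x):=\sum_n d_{n,k}x^n$ and the compact series $A(t):=\sum_j a_j t^j$ and $Z(t):=\sum_j z_j t^j$.

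For the forward direction, assume $D=(g,f)$. Since $f(0)=0$ and $f'(0)\neq 0$, $f$ has a compositional inverse $\bar f$ in $\mathbb{Z}[[t]]$. I would define $A(t):=t/\bar f(t)$, so that $f(x)=xA(f(x))$. Multiplying both sides by $g(x)f(x)^k$, expanding $A(f(x))=\sum_j a_j f(x)^j$, and extracting $[x^{n+1}]$ yields the $A$-sequence recurrence. Similarly, setting $Z(t):=(g(\bar f(t))-g(0))/(\bar f(t)\,g(\bar f(t)))$ gives the identity $g(x)-g(0)=x\,g(x)\,Z(f(x))$, whose $[x^{n+1}]$ coefficient is exactly the $Z$-sequence recurrence. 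One checks $a_0=1/\bar f'(0)\neq 0$ and $z_0=d_{1,0}/d_{0,0}$, which is nonzero for a nontrivial proper Riordan array.

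For the reverse direction, set $g(x):=g_0(x)=\sum_n d_{n,0}x^n$ and let $f(x)$ be the unique formal power series with $f(0)=0$ satisfying $f(x)=xA(f(x))$; existence and uniqueness follow from $A(0)=a_0\neq 0$ via Lagrange inversion, and yield $f'(0)=a_0$. I would then prove $d_{n,k}=[x^n]g(x)f(x)^k$ by induction on $n$. The base $n=0$ is immediate from triangularity, since $f$ has no constant term. Given row $n$, the identity $g(x)f(x)^{k+1}=\sum_j a_j\,x\,g(x)f(x)^{k+j}$ combined with the $A$-recurrence and the inductive hypothesis yields $d_{n+1,k+1}=[x^{n+1}]g(x)f(x)^{k+1}$; for the first column, the analogous identity $g(x)-g(0)=x\,g(x)\,Z(f(x))$ together with the $Z$-recurrence gives the match.

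The main obstacle I anticipate is in the reverse direction: the $A$-recurrence is an infinite sum in $j$, so one must invoke the lower-triangularity of $D$ to reduce each sum to a finite one at every entry, and one must verify that the $g$ and $f$ reconstructed from the $A$- and $Z$-data are mutually compatible rather than overdetermined. A standing assumption $d_{0,0}\neq 0$ is needed to recover $(g,f)$ unambiguously, as is implicit in the notion of a proper Riordan array used throughout the paper.
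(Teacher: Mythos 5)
The paper offers no proof of this proposition: it is quoted verbatim (with attribution) from He and Sprugnoli \cite{He}, so there is no in-paper argument to compare yours against. Judged on its own, your sketch is the standard generating-function proof of the sequence characterization and is essentially sound: from $A(t)=t/\bar{f}(t)$ you get $f(x)=xA(f(x))$, hence $g(x)f(x)^{k+1}=xg(x)f(x)^{k}A(f(x))$, and extracting $[x^{n+1}]$ gives the $A$-recurrence; your $Z(t)$ agrees with the paper's formula (1) and yields $g(x)-g(0)=xg(x)Z(f(x))$, whose coefficients give the $Z$-recurrence. In the converse, defining $g$ as the zeroth-column generating function and $f$ as the unique solution of $f=xA(f)$, $f(0)=0$, and inducting on rows does reconstruct $D=(g,f)$, since a lower triangular array is determined by its zeroth column together with the $A$-recurrence; your remark that $d_{0,0}\neq 0$ must be assumed (to exclude the zero matrix and make $g$ invertible) is exactly the right caveat.

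Two points need repair, though neither damages the core equivalence. First, in the converse the zeroth column agrees with $[x^{n}]g$ by the very definition of $g$, so the identity $g-g(0)=xgZ(f)$ is not an ingredient of the induction; it is a consequence of the $Z$-recurrence once the column formula is established. The $Z$-hypothesis is genuinely needed only in the ``only if'' direction, so your worry about the data being ``overdetermined'' resolves itself. Second, your claim that $z_0=d_{1,0}/d_{0,0}$ is nonzero for any nontrivial proper Riordan array is false: $z_0=g'(0)=d_{1,0}$, which vanishes, for example, for the arrays $\left(\frac{1+rx^2}{1+x^2},\frac{x}{1+x^2}\right)$ studied in this very paper. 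This is an imprecision inherited from the statement as quoted (the $z_0\neq 0$ requirement is not part of He and Sprugnoli's theorem and cannot be proved), so you should simply not assert it rather than wave at it. A last cosmetic point: $f'(0)=a_0$, so to land in the paper's ``proper'' normalization $f=x+a_2x^2+\cdots$ one needs $a_0=1$, consistent with the paper's stated normalization conventions; with general $a_0\neq 0$ one obtains a Riordan array in the broader sense.
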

The coefficients $a_0,a_1,a_2,\ldots$ and $z_0,z_1,z_2,\ldots$ are called the $A$-sequence and the $Z$-sequence of the Riordan array
$D=(g(x),f(x))$, respectively.
Letting $A(x)$ be the generating function of the $A$-sequence and $Z(x)$ be the generating function of the $Z$-sequence, we have
\begin{equation}\label{AZ_eq} A(x)=\frac{x}{\bar{f}(x)}, \quad Z(x)=\frac{1}{\bar{f}(x)}\left(1-\frac{1}{g(\bar{f}(x))}\right).\end{equation}
Here, $\bar{f}(x)$ is the series reversion of $f(x)$, defined as the solution $u(x)$ of the equation
$$f(u)=x$$ that satisfies $u(0)=0$.

The inverse of the Riordan array $(g, f)$ is given by
$$(g(x), f(x))^{-1}=\left(\frac{1}{g(\bar{f}(x))}, \bar{f}(x)\right).$$

For a Riordan array $D$, the matrix $P=D^{-1}\cdot \overline{D}$ is called its \emph{production matrix}, where $\overline{D}$ is the matrix $D$ with its top row removed.

The
concept of a \emph{production matrix} \cite{ProdMat_0,
ProdMat}
is a general one, but for this work we find it convenient to
review it in
the context of Riordan arrays. Thus let $P$ be an infinite
matrix. Letting
$\mathbf{r}_0$
be the row vector
$$\mathbf{r}_0=(1,0,0,0,\ldots),$$ we define $\mathbf{r}_i=\mathbf{r}_{i-1}P$, $i \ge 1$.
Stacking these rows leads to another infinite matrix which we
denote by
$A_P$. Then $P$ is said to be the \emph{production matrix} for
$A_P$.

\noindent If we let $$u^T=(1,0,0,0,\ldots,0,\ldots)$$ then we
have $$A_P=\left(\begin{array}{c}
u^T\\u^TP\\u^TP^2\\\vdots\end{array}\right)$$ and
$$\bar{I}A_P=A_PP$$ where $\bar{I}=(\delta_{i+1,j})_{i,j \ge 0}$ (where
$\delta$ is the usual Kronecker symbol):
\begin{displaymath} \bar{I}=\left(\begin{array}{ccccccc} 0 & 1
& 0 & 0 & 0 & 0 & \ldots \\0 & 0 & 1 & 0 & 0 & 0 & \ldots \\
0 & 0 & 0 & 1
& 0 & 0 & \ldots \\ 0 & 0 & 0 & 0 & 1 & 0 & \ldots \\ 0 & 0 &
0
& 0 & 0 & 1 & \ldots \\0 & 0  & 0 & 0 & 0 & 0 &\ldots\\ \vdots
& \vdots &
\vdots & \vdots & \vdots & \vdots &
\ddots\end{array}\right).\end{displaymath}
We have
\begin{equation}P=A_P^{-1}\bar{I}A_P.\end{equation} Writing
$\overline{A_P}=\bar{I}A_P$, we can write this equation as \begin{equation}P=A_P^{-1}\overline{A_P}.\end{equation} Note that $\overline{A_P}$ is $A_P$ with the first row removed.

The production matrix $P$ is sometimes \cite{P_W, Shapiro_bij} called the Stieltjes matrix $S_{A_P}$
associated to $A_P$. Other examples of the use of production matrices can be found in \cite{Arregui}, for instance.

In the context of Riordan
arrays, the production matrix associated to a proper Riordan
array takes on a special form\,:
 \begin{proposition} \label{RProdMat}
\cite[Proposition 3.1]{ProdMat}\label{AZ} Let $P$ be
an infinite production matrix and let $A_P$ be the matrix
induced by $P$. Then $A_P$ is an (ordinary) Riordan matrix if
and only if $P$ is
of the form \begin{displaymath} P=\left(\begin{array}{ccccccc}
\xi_0 & \alpha_0 & 0 & 0 & 0 & 0 & \ldots \\\xi_1 & \alpha_1 &
\alpha_0 & 0 &
0 & 0 & \ldots \\ \xi_2 & \alpha_2 & \alpha_1 & \alpha_0 & 0 &
0 & \ldots \\ \xi_3 & \alpha_3 & \alpha_2 & \alpha_1 &
\alpha_0
& 0 & \ldots
\\ \xi_4 & \alpha_4 & \alpha_3 & \alpha_2 & \alpha_1 &
\alpha_0
& \ldots \\\xi_5 & \alpha_5  & \alpha_4 & \alpha_3 & \alpha_2
&
\alpha_1
&\ldots\\ \vdots & \vdots & \vdots & \vdots & \vdots & \vdots
&
\ddots\end{array}\right),\end{displaymath} where $\xi_0 \neq 0$, $\alpha_0 \neq 0$. Moreover, columns $0$
and $1$ of
the matrix $P$ are the $Z$- and $A$-sequences,
respectively, of the Riordan array $A_P$. \end{proposition}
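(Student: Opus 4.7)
My plan is to translate the defining identity $\overline{A_P}=A_P\,P$ into a componentwise recurrence and match it against the A- and Z-sequence characterization of Riordan arrays in Proposition~\ref{Char}. Writing $d_{n,k}$ for the $(n,k)$-entry of $A_P$, this identity reads $d_{n+1,k}=\sum_{m\ge 0} d_{n,m}\,P_{m,k}$. For the forward direction I would assume $A_P=(g,f)$ is a Riordan array with sequences $(a_j)$ and $(z_j)$, and compare the Z-sequence relation $d_{n+1,0}=\sum_j z_j\,d_{n,j}$ with the displayed identity; since $A_P$ is invertible (its diagonal entries are nonzero), matching coefficients forces $P_{m,0}=z_m$. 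Doing the same with the A-sequence relation, rewritten as $d_{n+1,k}=\sum_j a_j\,d_{n,k-1+j}$ for $k\ge 1$, forces $P_{m,k}=a_{m-k+1}$ when $m\ge k-1$ and $0$ otherwise. Setting $\xi_j=z_j$ and $\alpha_j=a_j$ then produces the displayed Toeplitz pattern, the nonvanishing conditions $\xi_0\neq 0$ and $\alpha_0\neq 0$ carry over from Proposition~\ref{Char}, and the ``moreover'' statement is read off along the way.

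For the converse I would invert this procedure. Given $P$ in the displayed form, I define $A(x)=\sum_{j\ge 0}\alpha_j x^j$ and $Z(x)=\sum_{j\ge 0}\xi_j x^j$, and solve~(\ref{AZ_eq}) backwards: set $\bar f(x)=x/A(x)$, which vanishes at $0$ with nonzero linear coefficient $1/\alpha_0$ and so admits a compositional inverse $f$; then determine $g$ from $g(\bar f(x))=1/(1-\bar f(x)Z(x))$. Form the Riordan array $R=(g,f)$. By construction $R$ has A-sequence $(\alpha_j)$ and Z-sequence $(\xi_j)$, so Proposition~\ref{Char} says that the entries of $R$ obey exactly the linear recurrences that the displayed structure of $P$ forces on the entries of $A_P$. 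Since $R$ and $A_P$ share the first row $(1,0,0,\ldots)$ and both admit $P$ as a production matrix, the iteration $\mathbf{r}_i=\mathbf{r}_{i-1}P$ determines every subsequent row uniquely, so $A_P=R$ is a Riordan array.

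The step I expect to be the main obstacle is the reconstruction in the converse, namely verifying that the pair $(g,f)$ assembled from the extracted sequences really does have $(\alpha_j)$ and $(\xi_j)$ as its A- and Z-sequences. This is a short reversion computation using $\alpha_0\neq 0$ for $f$ together with a bookkeeping substitution using $\xi_0\neq 0$ for $g$, but it must be carried out carefully so that Proposition~\ref{Char} can be invoked in the converse direction. Once this is in place, the remainder is the clean uniqueness principle that the first row $(1,0,0,\ldots)$ together with the production matrix $P$ determines every row of the induced matrix, so any two matrices sharing these data must coincide.
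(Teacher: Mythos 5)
Your argument is sound, but note that the paper itself offers no proof of this statement: it is quoted verbatim from Deutsch, Ferrari and Rinaldi (Proposition 3.1 of \emph{Production matrices and Riordan arrays}), so there is no internal proof to compare against. What you propose is essentially the standard derivation, and it works: the identity $\overline{A_P}=A_P\,P$ together with the invertibility of the lower-triangular matrix $A_P$ (unit diagonal, so $P=A_P^{-1}\overline{A_P}$ is uniquely determined) lets you identify column $0$ of $P$ with the $Z$-sequence and the remaining columns with shifted copies of the $A$-sequence in the forward direction, while in the converse the reconstruction $\bar f(x)=x/A(x)$, $g(\bar f(x))=1/\bigl(1-\bar f(x)Z(x)\bigr)$ via equation (\ref{AZ_eq}) produces a Riordan array $R$ whose rows satisfy $\overline{R}=R\,P$, and the row-by-row iteration $\mathbf{r}_i=\mathbf{r}_{i-1}P$ starting from $(1,0,0,\ldots)$ then forces $A_P=R$. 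Two small points deserve care if you write this out in full: first, in the forward direction it is cleaner to argue by the uniqueness of the solution $X$ of $A_P X=\overline{A_P}$ (comparing $P$ with the Toeplitz-plus-first-column matrix built from the $A$- and $Z$-sequences) than by ``matching coefficients'' row by row; second, the paper's definition of a proper Riordan array has $f(x)=x+a_2x^2+\cdots$, which corresponds to $\alpha_0=1$ rather than merely $\alpha_0\neq 0$, so your reconstructed $f$ lies in the Deutsch--Ferrari--Rinaldi setting of the cited source rather than strictly in the paper's normalization (and, as with the quoted version of Proposition \ref{Char}, the hypothesis $\xi_0\neq 0$ excludes Riordan arrays whose first column terminates, such as the paper's own production matrix of $M(r)$ with $\xi_0=0$ --- an infelicity of the quoted statement, not of your proof).
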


Where possible, we shall refer to known sequences and triangles by their OEIS numbers \cite{SL1, SL2}. For instance, the Catalan numbers $C_n=\frac{1}{n+1}\binom{2n}{n}$ with g.f. $c(x)=\frac{1-\sqrt{1-4x}}{2x}$ is the sequence \seqnum{A000108}, the Fibonacci numbers are \seqnum{A000045}, and the Motzkin numbers $M_n=\sum_{k=0}^{\lfloor \frac{n}{2} \rfloor}\binom{n}{2k}C_k$  are \seqnum{A001006}.

The binomial matrix $B=\left(\binom{n}{k}\right)$ is \seqnum{A007318}. As a Riordan array, this is given by
$$B= \left(\frac{1}{1-x}, \frac{x}{1-x}\right).$$

Note that in this article all sequences $a_n$ that have $a_0 \ne 0$ are assumed to have $a_0=1$. Likewise for sequences $b_n$ with $b_0=0$ and $b_1 \ne 0$, we assume that $b_1=1$.

The papers \cite{Meixner, CB} explore the use of Riordan arrays to define constant coefficient orthogonal polynomials. Three relevant results \cite{Meixner} are as follows.
\begin{proposition}
The Riordan array $\left(\frac{1-\lambda x - \mu x^2}{1+rx+sx^2},\frac{x}{1+rx+sx^2}\right)$ is the coefficient array of the generalized Chebyshev polynomials of the second kind given by
$$Q_n(x)=(\sqrt{s})^n U_n\left(\frac{x-r}{2\sqrt{s}}\right)-\lambda(\sqrt{s})^{n-1} U_{n-1}\left(\frac{x-r}{2\sqrt{s}}\right)-\mu(\sqrt{s})^{n-2} U_{n-2}\left(\frac{x-r}{2\sqrt{s}}\right), \quad n=0,1,2,\ldots$$
\end{proposition}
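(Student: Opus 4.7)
The plan is to match bivariate generating functions on both sides of the claimed identity. A standard consequence of the Fundamental Theorem of Riordan arrays is that if $(g(t),f(t))$ has entries $T_{n,k}$ and $Q_n(x):=\sum_k T_{n,k}\, x^k$, then
$$\sum_{n\geq 0} Q_n(x)\, t^n \;=\; \sum_{k\geq 0} x^k \sum_{n\geq 0} T_{n,k}\, t^n \;=\; \sum_{k\geq 0} g(t)\, (x f(t))^k \;=\; \frac{g(t)}{1-x f(t)},$$
which I would simply quote. Plugging in $g(t)=\frac{1-\lambda t-\mu t^2}{1+rt+st^2}$ and $f(t)=\frac{t}{1+rt+st^2}$, the factor $1+rt+st^2$ cancels from numerator and denominator and one obtains the clean closed form
$$\sum_{n\geq 0} Q_n(x)\, t^n \;=\; \frac{1-\lambda t-\mu t^2}{1-(x-r)\,t+s t^2}.$$

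The second step is to identify the denominator with a rescaled Chebyshev generating function. Starting from the classical
$$\sum_{n\geq 0} U_n(y)\, t^n = \frac{1}{1-2y t + t^2},$$
I would substitute $y=(x-r)/(2\sqrt{s})$ and replace $t$ by $\sqrt{s}\,t$ to get
$$\sum_{n\geq 0} (\sqrt{s})^n\, U_n\!\left(\frac{x-r}{2\sqrt{s}}\right) t^n \;=\; \frac{1}{1-(x-r)\,t+s t^2}.$$
Multiplying through by $(1-\lambda t-\mu t^2)$ and reading off the coefficient of $t^n$ on both sides yields exactly the three-term expression claimed for $Q_n(x)$.

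I do not expect any real obstacle: the entire argument is a cancellation in a geometric series followed by a change of variable in the Chebyshev generating function. The only point that needs a brief word is the boundary: for $n=0$ the $\lambda$ and $\mu$ terms must be interpreted as zero, and for $n=1$ the $\mu$ term must vanish. This is handled transparently by the generating function extraction with the convention $U_{-1}=U_{-2}=0$, so nothing further is needed. Note also that although $\sqrt{s}$ appears, only integer powers $(\sqrt{s})^n$ and $(\sqrt{s})^{n-1},(\sqrt{s})^{n-2}$ paired with the corresponding Chebyshev polynomials arise, so $Q_n(x)$ is a polynomial in $x$ with coefficients in $\mathbb{Z}[r,s,\lambda,\mu]$, consistent with its Riordan-array description.
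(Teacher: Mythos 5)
Your argument is correct: the bivariate generating function $\sum_n Q_n(x)t^n = g(t)/(1-xf(t))$ collapses to $\frac{1-\lambda t-\mu t^2}{1-(x-r)t+st^2}$, and comparing with the rescaled Chebyshev generating function $\sum_n (\sqrt{s})^n U_n\bigl(\tfrac{x-r}{2\sqrt{s}}\bigr)t^n = \frac{1}{1-(x-r)t+st^2}$ gives exactly the stated three-term expression, with the boundary cases $n=0,1$ handled by the convention $U_{-1}=U_{-2}=0$. The paper itself states this proposition without proof, citing it from the reference \cite{Meixner}, and your generating-function derivation is the standard argument used there, so there is nothing further to add.
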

\begin{proposition}
The Riordan array $L$ where
$$L^{-1}=\left(\frac{1-\lambda x - \mu x^2}{1+ax+bx^2},
\frac{x}{1+ax+bx^2}\right)^{-1}$$ has production matrix
(Stieltjes matrix)
given by \begin{displaymath}
P=S_L=\left(\begin{array}{ccccccc}
a+\lambda & 1 & 0 & 0 & 0 & 0 & \ldots \\b+\mu & a & 1 & 0 & 0
& 0 & \ldots
\\ 0 & b & a & 1 & 0 & 0 & \ldots \\ 0 & 0 & b & a & 1 & 0 &
\ldots \\ 0 & 0 & 0 & b & a & 1 & \ldots \\0 & 0  & 0 & 0 & b
&
a &\ldots\\
\vdots & \vdots & \vdots & \vdots & \vdots & \vdots &
\ddots\end{array}\right).\end{displaymath}
\end{proposition}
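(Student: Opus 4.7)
The plan is to recognize this as a direct application of Proposition \ref{RProdMat} together with the explicit A/Z formulas in equation (\ref{AZ_eq}). Since the hypothesis describes $L^{-1}=(g,f)$ with $g(x)=(1-\lambda x-\mu x^2)/(1+ax+bx^2)$ and $f(x)=x/(1+ax+bx^2)$, we have $L=(G,F)$ where $F=\bar{f}$ and $G=1/g(\bar{f})$. The production matrix of $L$ therefore has its first two columns determined by the $Z$- and $A$-sequences of $L$ itself, computed with $G$ and $F$ in place of $g$ and $f$.

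First I would compute the $A$-sequence of $L$. Applying the first identity in (\ref{AZ_eq}) to $L=(G,F)$ gives $A(x)=x/\bar{F}(x)$. The crucial simplification is that $\bar{F}=\bar{\bar{f}}=f$, so
\[ A(x)=\frac{x}{f(x)}=\frac{x}{x/(1+ax+bx^2)}=1+ax+bx^2.\]
This produces the $A$-sequence $(1,a,b,0,0,\ldots)$, which is exactly column $1$ of the matrix $P$ in the statement.

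Next I would compute the $Z$-sequence using $Z(x)=\tfrac{1}{\bar{F}(x)}\bigl(1-\tfrac{1}{G(\bar{F}(x))}\bigr)$. Again using $\bar{F}=f$, we have $G(\bar{F}(x))=G(f(x))=1/g(\bar{f}(f(x)))=1/g(x)$, so $1-1/G(f(x))=1-g(x)$. A short algebraic step gives
\[ 1-g(x)=\frac{(a+\lambda)x+(b+\mu)x^2}{1+ax+bx^2},\]
and multiplying by $1/f(x)=(1+ax+bx^2)/x$ cancels the denominator and a factor of $x$, leaving
\[ Z(x)=(a+\lambda)+(b+\mu)x.\]
This is precisely column $0$ of $P$, namely $(a+\lambda,\,b+\mu,\,0,\,0,\ldots)$.

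Finally I would invoke Proposition \ref{RProdMat}: the production matrix of any proper Riordan array has the banded Toeplitz-with-correction form displayed there, with column $0$ given by the $Z$-sequence and column $1$ by the $A$-sequence. Assembling the entries just computed yields exactly the displayed $P=S_L$. There is no real obstacle here; the only thing to be careful about is the duality $\bar{F}=f$, which is what makes the calculation of both $A(x)$ and $Z(x)$ collapse to one-line rational simplifications rather than requiring an explicit series reversion.
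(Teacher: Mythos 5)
Your proposal is correct. Note that the paper itself offers no proof of this proposition --- it is quoted, together with two companion results, from the reference on Meixner-type results for Riordan arrays --- so there is no internal argument to compare against; but your derivation is exactly the standard one and uses only the machinery the paper has already set up. The two key points are both handled properly: the duality $\bar{F}=\overline{\bar{f}}=f$, which turns $A(x)=x/\bar{F}(x)$ into $x/f(x)=1+ax+bx^2$ and collapses $G(\bar{F}(x))$ to $1/g(x)$ so that $Z(x)=\frac{1}{f(x)}\bigl(1-g(x)\bigr)=(a+\lambda)+(b+\mu)x$; and the appeal to Proposition \ref{RProdMat}, which not only identifies columns $0$ and $1$ of $S_L$ with the $Z$- and $A$-sequences but also forces the remaining columns to be downward shifts of the $A$-sequence, giving the tridiagonal band $(b,a,1)$ in the displayed matrix.
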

\begin{proposition} If $L=(g(x),f(x))$ is a
Riordan array and $P=S_L$ is tridiagonal of the form
\begin{equation}\label{P_mat}
P=S_L=\left(\begin{array}{ccccccc} a_1 & 1 & 0 & 0 & 0 & 0 &
\ldots \\b_1 & a & 1 & 0 & 0 & 0 & \ldots \\ 0 & b & a & 1 & 0
& 0 & \ldots \\
0 & 0 & b & a & 1 & 0 & \ldots \\ 0 & 0 & 0 & b & a & 1 &
\ldots \\0 & 0  & 0 & 0 & b & a &\ldots\\ \vdots & \vdots &
\vdots & \vdots &
\vdots & \vdots & \ddots\end{array}\right),\end{equation}
then $L^{-1}$ is the coefficient array of the family of
orthogonal polynomials
$p_n(x)$ where $p_0(x)=1$, $p_1(x)=x-a_1$, and
$$p_{n+1}(x)=(x-a)p_n(x)-b_n p_{n-1}(x), \qquad n \ge 2,$$
where $b_n$ is the sequence
$0,b_1,b,b,b,\ldots$. \end{proposition}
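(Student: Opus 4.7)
The plan is to pass from the production-matrix identity $P=L^{-1}\overline{L}$ to a functional identity $PL^{-1}=L^{-1}\overline{I}$, and then read off the three-term recurrence row by row from the tridiagonal shape of $P$. By the definition of the production matrix, $P=L^{-1}\overline{L}$, and since removing the top row is implemented by left multiplication by $\overline{I}$, we have $\overline{L}=\overline{I}L$. Substituting gives $LP=\overline{I}L$, and rearranging yields the key relation
$$PL^{-1}=L^{-1}\overline{I}.$$

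Next I would interpret this entrywise. Since $(\overline{I})_{j,k}=\delta_{j+1,k}$, we get $(L^{-1}\overline{I})_{n,k}=(L^{-1})_{n,k-1}$ (with the convention that $(L^{-1})_{n,-1}=0$). On the other side, $(PL^{-1})_{n,k}=\sum_{m}P_{n,m}(L^{-1})_{m,k}$. Multiplying by $x^{k}$ and summing over $k$, and writing $p_n(x)=\sum_{k}(L^{-1})_{n,k}x^{k}$ for the row polynomials of $L^{-1}$, this becomes the clean identity
$$xp_n(x)=\sum_{m\ge 0}P_{n,m}\,p_m(x), \qquad n\ge 0.$$

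The proof then reduces to plugging in the tridiagonal shape of $P$ given in \eqref{P_mat}. For $n=0$ the sum collapses to $a_1p_0(x)+p_1(x)=xp_0(x)$; since $g(0)=1$ and $f(x)=x+\cdots$ force $L$ to be lower triangular with unit diagonal, so is $L^{-1}$, hence $p_0(x)=1$, which gives $p_1(x)=x-a_1$. For $n=1$ the identity reads $b_1p_0(x)+a\,p_1(x)+p_2(x)=xp_1(x)$, giving $p_2(x)=(x-a)p_1(x)-b_1p_0(x)$. For $n\ge 2$ the identity becomes $bp_{n-1}(x)+ap_n(x)+p_{n+1}(x)=xp_n(x)$, which rearranges to $p_{n+1}(x)=(x-a)p_n(x)-bp_{n-1}(x)$. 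Collecting all three cases using the sequence $b_n=0,b_1,b,b,b,\ldots$ yields exactly the stated recurrence, and $L^{-1}$ is the coefficient array of $(p_n)$ by construction.

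The one genuine point that requires care is bookkeeping at the top-left corner: the initial values $a_1,b_1$ in $P$ differ from the bulk parameters $a,b$, so the $n=0,1$ rows of the identity $xp_n=\sum P_{n,m}p_m$ must be handled separately to produce the correct $p_1$ and $p_2$. Beyond that the argument is formal, relying only on $L$ being a proper Riordan array (so that $(L^{-1})_{n,n}=1$ and the polynomials $p_n$ are monic of degree $n$). No assumption of positivity on $b_1,b$ is needed at this stage; if one is interested in genuine orthogonality, it can be invoked at the end via Favard's theorem.
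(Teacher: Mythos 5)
Your proof is correct, and in fact the paper offers no proof of this proposition to compare against: it is one of three results quoted verbatim from \cite{Meixner}, so the argument has to be supplied from elsewhere. Your route is the natural one given the paper's own setup: the identity $\bar{I}A_P=A_PP$, i.e.\ $LP=\bar{I}L$, is already stated in the production-matrix discussion, and from $PL^{-1}=L^{-1}\bar{I}$ the row-polynomial identity $xp_n(x)=\sum_m P_{n,m}\,p_m(x)$ follows exactly as you say (all sums are finite since $P$ is tridiagonal and $L^{-1}$ is lower triangular). Reading off the three row types of $P$ then gives $p_1=x-a_1$, $p_2=(x-a)p_1-b_1p_0$, and $p_{n+1}=(x-a)p_n-bp_{n-1}$ for $n\ge 2$, with $p_0=1$ because a proper Riordan array, hence its inverse, is unit lower triangular; your separate treatment of the corner rows is precisely the care the statement requires, and it even repairs a small infelicity in the statement as printed (the recurrence is asserted only for $n\ge 2$, so literally read it never produces $p_2$; your $n=1$ case with $b_1$ supplies it, which is clearly the intended meaning of the sequence $0,b_1,b,b,\ldots$). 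By contrast, the companion propositions quoted alongside suggest that the source \cite{Meixner} reaches the result via the explicit identification of $L^{-1}$ as $\left(\frac{1-\lambda x-\mu x^2}{1+ax+bx^2},\frac{x}{1+ax+bx^2}\right)$ and the generalized Chebyshev form of its row polynomials; your argument avoids computing $g$ and $f$ altogether and so is, in that sense, more general and more economical. Your closing remark is also apt: the recurrence makes $L^{-1}$ the coefficient array of a monic polynomial system, and genuine orthogonality (for suitable $b_1,b$) is a separate appeal to Favard's theorem, a distinction the paper glosses over as well.
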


\section{Definitions and Properties}
We define the \emph{restricted Chebyshev-Boubaker polynomials $P_n(x;r)$} to be the one-parameter family of orthogonal polynomials $P_n(x;r)$ whose coefficient arrays are defined by the Riordan arrays
$$\left(\frac{1+r x^2}{1+x^2}, \frac{x}{1+x^2}\right), \quad r \in \mathbb{Z}.$$
Here, $r$ is taken to be an integer parameter ($r \in \mathbb{Z}$). When $r=0$, we get the modified Chebyshev polynomials $U_{n}\left(\frac{x}{2}\right)$ \cite{Mason}, while the case $r=3$ coincides with the Boubaker polynomials \cite{Agida, NMR, Boubaker, BPES, Boubaker07, Dada, Labadiah1, Labadiah2, Milanovic, Slama, Zhao}.
Thus the coefficient array of this family of orthogonal polynomials begins
$$P(r)=\left(
\begin{array}{ccccccc}
 1 & 0 & 0 & 0 & 0 & 0 & 0 \\
 0 & 1 & 0 & 0 & 0 & 0 & 0 \\
 r-1 & 0 & 1 & 0 & 0 & 0 & 0 \\
 0 & r-2 & 0 & 1 & 0 & 0 & 0 \\
 1-r & 0 & r-3 & 0 & 1 & 0 & 0 \\
 0 & 3-2 r & 0 & r-4 & 0 & 1 & 0 \\
 r-1 & 0 & 3 (2-r) & 0 & r-5 & 0 & 1 \\
\end{array}
\right),$$  and hence
the polynomials begin
$$P_0(x;r)=1, P_1(x;r)=x, P_2(x;r)=x^2+r-1, P_3(x;r)=x^3+x(r-2),\ldots.$$
We have
$$P_n(x;r)=U_n\left(\frac{x}{2}\right)+r U_{n-2}\left(\frac{x}{2}\right).$$
The inverse of the coefficient matrix begins
$$M(r)=\left(
\begin{array}{ccccccc}
 1 & 0 & 0 & 0 & 0 & 0 & 0 \\
 0 & 1 & 0 & 0 & 0 & 0 & 0 \\
 1-r & 0 & 1 & 0 & 0 & 0 & 0 \\
 0 & 2-r & 0 & 1 & 0 & 0 & 0 \\
 r^2-3 r+2 & 0 & 3-r & 0 & 1 & 0 & 0 \\
 0 & r^2-4 r+5 & 0 & 4-r & 0 & 1 & 0 \\
 -r^3+5 r^2-9 r+5 & 0 & r^2-5 r+9 & 0 & 5-r & 0 & 1 \\
\end{array}
\right),$$ and thus the moment sequence $\mu(n;r)$ for the polynomial family $P_n(x;r)$ begins
$$1, 0, 1 - r, 0, r^2 - 3r + 2, 0, - r^3 + 5r^2 - 9r + 5, 0, r^4 - 7r^3 + 20r^2 - 28r + 14, 0, \ldots.$$

The production matrix of the moment matrix $M(r)$ begins
$$\left(
\begin{array}{ccccccc}
 0 & 1 & 0 & 0 & 0 & 0 & 0 \\
 1-r & 0 & 1 & 0 & 0 & 0 & 0 \\
 0 & 1 & 0 & 1 & 0 & 0 & 0 \\
 0 & 0 & 1 & 0 & 1 & 0 & 0 \\
 0 & 0 & 0 & 1 & 0 & 1 & 0 \\
 0 & 0 & 0 & 0 & 1 & 0 & 1 \\
 0 & 0 & 0 & 0 & 0 & 1 & 0 \\
\end{array}
\right),$$ indicating that the family of orthogonal polynomials $P_n(x;r)$ obeys the three-term recurrence
$$P_n(x;r)=x P_{n-1}(x;r)-P_{n-2}(x;r),$$ with
$$P_0(x;r)=1, P_1(x;r)=x\quad \textrm{and}\quad P_2(x;r)=x^2+r-1.$$

\section{The coefficient matrix $P(r)$}
We calculate the general term $P_{n,k}$ of the coefficient matrix $P(r)$. For this, we use the method of coefficients \cite{MC}.
\begin{proposition} We have
$$P_{n,k}=\binom{\frac{n+k}{2}}{k}(-1)^{\frac{n-k}{2}}\frac{1+(-1)^{n-k}}{2}+r\binom{\frac{n+k-2}{2}}{k}(-1)^{\frac{n-k}{2}}\frac{1+(-1)^{n-k}}{2}+r.0^{n+k}.$$ \end{proposition}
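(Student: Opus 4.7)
The plan is a direct coefficient extraction from the generating function of the Riordan array. From $P_{n,k}=[x^n]g(x)f(x)^k$ with $g(x)=(1+rx^2)/(1+x^2)$ and $f(x)=x/(1+x^2)$, I would write
\[
P_{n,k} \;=\; [x^n]\frac{(1+rx^2)\,x^k}{(1+x^2)^{k+1}} \;=\; [x^{n-k}]\frac{1}{(1+x^2)^{k+1}} \;+\; r\,[x^{n-k-2}]\frac{1}{(1+x^2)^{k+1}},
\]
which already mirrors the two binomial summands in the proposition. The proof then reduces to evaluating each of these two coefficients in closed form.

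For this I would invoke the standard negative-binomial expansion
\[
\frac{1}{(1+x^2)^{k+1}} \;=\; \sum_{j\ge 0}(-1)^j\binom{k+j}{j}x^{2j},
\]
which is supported only on even powers of $x$. The factor $\tfrac{1+(-1)^{n-k}}{2}$ in the statement is precisely the indicator that $n-k$ is even, enforcing exactly this parity condition. Reading off $j=(n-k)/2$ for the first extraction and $j=(n-k)/2-1$ for the second, and using $\binom{k+j}{j}=\binom{k+j}{k}$, converts them into the claimed binomial factors $\binom{(n+k)/2}{k}$ and $\binom{(n+k-2)/2}{k}$ respectively, each carrying the sign $(-1)^{(n-k)/2}$ (the unit shift from the second extraction is absorbed into the coefficient of $r$).

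The only delicate point, and the main obstacle such as it is, is the boundary cell $(n,k)=(0,0)$. Formally $[x^{-2}]$ of any power series is zero, but the closed binomial form with the convention $\binom{-1}{0}=1$ spuriously produces a nonzero contribution there. The extra summand $r\cdot 0^{n+k}$, which vanishes except when $n=k=0$, is precisely the bookkeeping correction that restores agreement between the closed binomial expression and the true coefficient at this corner. Once the parity indicator, the sign bookkeeping in the shifted second extraction, and this single boundary cell are handled, the verification is immediate.
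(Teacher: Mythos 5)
Your proposal follows essentially the same route as the paper's own proof: write $P_{n,k}=[x^{n-k}](1+rx^2)(1+x^2)^{-(k+1)}$, expand by the negative binomial series $\sum_{j\ge0}\binom{k+j}{j}(-1)^jx^{2j}$, read off the coefficients of $x^{n-k}$ and $x^{n-k-2}$, and record the parity restriction through the factor $\frac{1+(-1)^{n-k}}{2}$. You are in fact more explicit than the paper about the origin of the corner term $r\cdot 0^{n+k}$ at $(n,k)=(0,0)$ (the paper's proof never discusses it, and note that with the convention $\binom{-1}{0}=1$ the needed correction is $-r\cdot 0^{n+k}$, the sign that appears in the paper's subsequent corollary), so the proposal is sound.
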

\begin{proof}
We have
\begin{eqnarray*} P_{n,k}&=& \frac{1+rx^2}{1+x} \left(\frac{x}{1+x^2}\right)^k \\
&=&[x^{n-k}](1+rx^2)\left(\frac{1}{1+x^2}\right)^{k+1}\\
&=&[x^{n-k}](1+rx^2) \sum_{j=0}^{\infty} \binom{-(k+1)}{j}x^{2j}\\
&=&[x^{n-k}](1+rx^2 \sum_{j=0}^{\infty} \binom{k+j}{j}(-1)^j x^{2j}\\
&=&[x^{n-k}]\sum_{j=0}^{\infty} \binom{k+j}{j}(-1)^j x^{2j}+r[x^{n-k-2}]\sum_{j=0}^{\infty} \binom{k+j}{j}(-1)^j x^{2j}\\
&=&\binom{\frac{n+k}{2}}{k}(-1)^{\frac{n-k}{2}}\frac{1+(-1)^{n-k}}{2}+r\binom{\frac{n+k-2}{2}}{k}(-1)^{\frac{n-k}{2}}\frac{1+(-1)^{n-k}}{2}+r.0^{n+k}.
\end{eqnarray*}
\end{proof}
\begin{corollary}
We have
$$P_{n,k}=\left(\binom{\frac{n+k}{2}}{k}+r \binom{\frac{n+k-2}{2}}{k}\right)(-1)^{\frac{n-k}{2}}\frac{1+(-1)^{n-k}}{2}-r.0^{n+k},$$ and
$$P_{n,k}=\binom{\frac{n+k}{2}}{k}\left(1-\frac{r(n-k)}{n+k+0^{n+k}}\right)(-1)^{\frac{n-k}{2}}\frac{1+(-1)^{n-k}}{2}.$$
\end{corollary}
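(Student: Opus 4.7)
The plan is to derive both formulas as routine algebraic rearrangements of the statement of the preceding Proposition; no new ingredients are needed, so the whole argument is a single chain of manipulations.

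For the first identity, I would simply factor the common quantity $(-1)^{(n-k)/2}\cdot\tfrac{1+(-1)^{n-k}}{2}$ out of the two binomial summands in the proposition. That immediately collects the coefficients $\binom{(n+k)/2}{k}$ and $r\binom{(n+k-2)/2}{k}$ inside a single pair of parentheses, leaving only a stray boundary term to be reconciled. The subtle point is that at $n=k=0$ the symbol $\binom{-1}{0}$ has to be read as $1$ in order for the sum in the parentheses to make formal sense, whereas the original $[x^{n-k-2}]$ interpretation in the Proposition's proof demands that this slot contribute $0$; a compensating $0^{n+k}$ correction, with the sign chosen to cancel the spurious contribution of $r\binom{-1}{0}$, absorbs the discrepancy at the unique point $n+k=0$.

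For the second identity, the key observation is the standard binomial relation
$$\binom{m-1}{k}=\frac{m-k}{m}\binom{m}{k},$$
which I would apply with $m=\tfrac{n+k}{2}$. This rewrites $\binom{(n+k-2)/2}{k}$ as $\tfrac{n-k}{n+k}\binom{(n+k)/2}{k}$ whenever $n+k>0$, and pulling $\binom{(n+k)/2}{k}$ outside the parentheses of the first identity produces the scalar factor $1-\tfrac{r(n-k)}{n+k}$ exhibited in the statement.

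The main obstacle, small but genuine, is the boundary case $n=k=0$, where the rewriting above divides by zero. This is exactly why the denominator is written $n+k+0^{n+k}$: the $0^{n+k}$ term equals $1$ at $n=k=0$ and $0$ elsewhere, so it regularises the denominator at the single problem point while leaving the identity untouched everywhere else. Since $n-k=0$ at that point as well, the added fraction vanishes and the formula correctly returns $\binom{0}{0}\cdot 1\cdot 1\cdot 1=1$, matching $P_{0,0}=1$ read off the Riordan array definition. With that one check in place, both identities follow from the Proposition by the manipulations just described.
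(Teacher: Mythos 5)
Your overall strategy (rearrange the Proposition, then use $\binom{m-1}{k}=\frac{m-k}{m}\binom{m}{k}$ with $m=\frac{n+k}{2}$, handling $n=k=0$ via the $0^{n+k}$ term) is exactly the intended route — the paper offers no separate proof of the Corollary. However, your chain of manipulations contains a genuine sign gap that you gloss over. If you factor the Proposition as you describe, the parenthesis contains $+r\binom{(n+k-2)/2}{k}$, and substituting $\binom{(n+k-2)/2}{k}=\frac{n-k}{n+k}\binom{(n+k)/2}{k}$ then yields the factor $1+\frac{r(n-k)}{n+k}$, \emph{not} the factor $1-\frac{r(n-k)}{n+k}$ that appears in the second display. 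So the argument as written cannot produce both displayed formulas: as printed they are mutually inconsistent, and your proposal silently flips the sign in passing from the first to the second.

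The resolution is that the coefficient extraction in the Proposition's proof carries an extra minus sign that the printed statement drops: in $r[x^{n-k-2}]\sum_j\binom{k+j}{j}(-1)^jx^{2j}$ the relevant index is $j=\frac{n-k-2}{2}$, giving $(-1)^{(n-k)/2-1}=-(-1)^{(n-k)/2}$. Hence the correct form of the first identity is
$$P_{n,k}=\left(\binom{\frac{n+k}{2}}{k}-r\binom{\frac{n+k-2}{2}}{k}\right)(-1)^{\frac{n-k}{2}}\frac{1+(-1)^{n-k}}{2}+r\cdot 0^{n+k},$$
from which the second display (which is correct, and agrees with the paper's later expansion $P_n(x;r)=\sum_k\binom{n-k}{k}\frac{n-(r+1)k}{n-k}(-1)^kx^{n-2k}$) does follow by your Pascal-type step. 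A single numerical check beyond $n=k=0$ would have exposed this: for $n=2$, $k=0$ the matrix gives $P_{2,0}=r-1$, whereas the first display as printed (and your factoring of the Proposition as printed) gives $-(1+r)$. Your treatment of the boundary point $n=k=0$ is fine, but a complete proof must identify and correct this sign, rather than inherit it from the statement.
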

Thus we have
$$P_n(x;r)=\sum_{k=0}^n P_{n,k} x^k.$$
Two special cases have other well-known expressions.
We have
$$P_n(x;0)=U_n(x/2)=\sum_{k=0}^{\lfloor \frac{n}{2} \rfloor}\binom{n-k}{k}(-1)^k x^{n-2k},$$ and
$$P_n(x;3)=B_n(x)=\sum_{k=0}^{\lfloor \frac{n}{2} \rfloor}\binom{n-k}{k}\frac{n-4k}{n-k}(-1)^k x^{n-2k}.$$
In general, we have
$$P_n(x;r)=\sum_{k=0}^{\lfloor \frac{n}{2} \rfloor}\binom{n-k}{k}\frac{n-(r+1)k}{n-k}(-1)^k x^{n-2k}.$$

We now turn to look at the central terms $P_{2n,n}$ and $P_{2n,n+1}$ of the coefficient matrix $P(r)$.
We have
\begin{proposition} $$P_{2n,n}(r)=0^n-(r-3)(-1)^{\frac{n}{2}}\binom{\frac{3n}{2}-1}{\frac{n}{2}-1}\frac{1+(-1)^n}{2}.$$ Alternatively, $$P_{2n,n}(r)=(r-2)0^n -(r-3)(-1)^{\frac{n}{2}}\binom{\frac{3n}{2}-1}{n}\frac{1+(-1)^n}{2}.$$
\end{proposition}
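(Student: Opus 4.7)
My plan is to extract $P_{2n,n}$ directly from the coefficient-extraction step $P_{n,k}=[x^{n-k}](1+rx^2)(1+x^2)^{-(k+1)}$ used in the proof of the preceding Proposition. Setting $n\mapsto 2n$ and $k\mapsto n$ gives
$$P_{2n,n}=[x^n](1+x^2)^{-(n+1)}+r\,[x^{n-2}](1+x^2)^{-(n+1)},$$
where the second term is understood to vanish for $n<2$.

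Expanding $(1+x^2)^{-(n+1)}=\sum_{j\ge 0}(-1)^j\binom{n+j}{j}x^{2j}$ makes it clear that both coefficients vanish unless $n$ is even, which accounts for the factor $\frac{1+(-1)^n}{2}$ in the claim. For even $n\ge 2$, the coefficients are extracted at $j=n/2$ and $j=n/2-1$, giving
$$P_{2n,n}=(-1)^{n/2}\binom{3n/2}{n/2}-r\,(-1)^{n/2}\binom{3n/2-1}{n/2-1}.$$
Using the symmetry $\binom{m}{j}=\binom{m}{m-j}$, the two binomials rewrite as $\binom{3n/2}{n}$ and $\binom{3n/2-1}{n}$, so the expression becomes $(-1)^{n/2}\bigl[\binom{3n/2}{n}-r\binom{3n/2-1}{n}\bigr]$.

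The proof then reduces to the one-line identity $\binom{3n/2}{n}=3\binom{3n/2-1}{n}$, valid for even $n\ge 2$. This follows from Pascal's rule $\binom{3n/2}{n}=\binom{3n/2-1}{n}+\binom{3n/2-1}{n-1}$ together with the adjacent-coefficient ratio $\binom{3n/2-1}{n-1}/\binom{3n/2-1}{n}=n/(n/2)=2$. Substituting yields $P_{2n,n}=(3-r)(-1)^{n/2}\binom{3n/2-1}{n/2-1}=-(r-3)(-1)^{n/2}\binom{3n/2-1}{n/2-1}$ for even $n\ge 2$, which is the first stated form; the isolated $0^n$ then recovers the boundary value $P_{0,0}=1$. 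The alternative form is obtained by rewriting the binomial as $\binom{3n/2-1}{n}$ and adjusting the constant term at $n=0$ so that the total evaluates to $1$, forcing the coefficient of $0^n$ to be $r-2$. I anticipate no real obstacle beyond careful sign-tracking of the $(-1)^{n/2-1}$ factor arising from the $[x^{n-2}]$ extraction, and separate verification of the two boundary cases ($n=0$ and odd $n$).
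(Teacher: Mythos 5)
Your proof is correct, but it follows a different route from the paper's. The paper proves the proposition (and the $r=3$ corollary) by invoking the central-coefficient theorem for Riordan arrays from the reference on central coefficients: it computes $v(x)=\overline{x(1+x^2)}$ in closed form via radicals and writes down an explicit generating function for $P_{2n,n}(r)$, observing that this reduces to $1$ when $r=3$; the binomial form of $P_{2n,n}(r)$ is then read off from (or matched against) that generating function rather than derived by direct extraction. You instead work entirely at the level of the coefficient formula $P_{n,k}=[x^{n-k}](1+rx^2)(1+x^2)^{-(k+1)}$ already established for the previous proposition, specialize to $(2n,n)$, and reduce everything to the elementary identity $\binom{3n/2}{n/2}=3\binom{3n/2-1}{n/2-1}$ (equivalently $\binom{3n/2}{n}=3\binom{3n/2-1}{n}$ after symmetry), which you verify by Pascal's rule and an adjacent-ratio computation; the parity factor $\frac{1+(-1)^n}{2}$ and the $0^n$ (respectively $(r-2)0^n$) corrections fall out of the evenness of the series and the $n=0$ boundary case, where your first form tacitly uses the convention $\binom{-1}{-1}=0$ while the alternative form uses $\binom{-1}{0}=1$. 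Your argument is more elementary and arguably a more complete verification of the stated binomial expressions, while the paper's generating-function route buys the explicit closed-form generating function itself, which the paper then reuses (e.g., to see the $r=3$ collapse at the series level and in the subsequent Hankel-transform discussion); both are valid, and they complement each other.
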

\begin{corollary}
For the Boubaker polynomials, we have $$B_{2n,n}=P_{2n,n}(3)=0^n.$$
\end{corollary}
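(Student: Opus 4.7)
The plan is to derive the corollary as a direct specialization of the preceding proposition at $r=3$. The proposition gives the closed form
\[
P_{2n,n}(r)=0^n-(r-3)(-1)^{\frac{n}{2}}\binom{\frac{3n}{2}-1}{\frac{n}{2}-1}\frac{1+(-1)^n}{2},
\]
and the point of the corollary is simply that the entire second summand carries the factor $(r-3)$. Since the Boubaker polynomials correspond to the choice $r=3$, substituting this value annihilates that term.

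Concretely, I would set $r=3$ in the above identity. The coefficient $(r-3)$ becomes $0$, so regardless of the parity of $n$ and regardless of whether the binomial $\binom{3n/2-1}{n/2-1}$ is well-defined or vanishes on parity grounds (the factor $(1+(-1)^n)/2$ already kills the odd-$n$ case), the whole second term contributes zero. What remains is
\[
P_{2n,n}(3)=0^n,
\]
which is precisely the claim $B_{2n,n}=0^n$, using the identification $B_n(x)=P_n(x;3)$ recorded earlier in this section.

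There is essentially no obstacle here: the content of the corollary is entirely in the preceding proposition, and the corollary merely isolates the algebraic phenomenon that $r-3$ is a factor of the nontrivial part of $P_{2n,n}(r)$. The only thing worth noting in the write-up is that this is the reason the case $r=3$ was singled out as the Boubaker specialization in the introduction, where the property $P_{2n,n}=0^n$ was announced as a distinguishing feature of the Boubaker polynomials within the restricted Chebyshev-Boubaker family.
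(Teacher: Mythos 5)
Your proof is correct, but it takes a different route from the one the paper actually records under this corollary. You treat the corollary as an immediate specialization of the closed form in the preceding proposition: the nontrivial summand of $P_{2n,n}(r)$ carries the explicit factor $(r-3)$, so setting $r=3$ leaves only $0^n$. The paper instead proves the statement by computing the generating function of the central sequence $P_{2n,n}(r)$ using the central-coefficient formula for Riordan arrays (the composition with $v(x)=\overline{x(1+x^2)}$), obtaining an explicit radical expression in $x$ and $r$, and then observing that this expression collapses to the constant $1$ when $r=3$ --- and $1$ is precisely the generating function of $0^n$. In effect the paper's proof does double duty: it is really the derivation underlying the proposition itself (which is stated without its own proof), with the corollary falling out as the remark that the generating function degenerates at $r=3$. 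Your argument is the cleaner logical deduction \emph{given} the proposition, and it makes transparent why $r=3$ is the distinguished value; the paper's generating-function computation is what actually justifies the proposition's formula in the first place. Neither approach has a gap, but a fully self-contained write-up along your lines would still owe the reader a proof of the proposition's closed form, which is exactly what the paper's displayed computation supplies.
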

\begin{proof} In order to prove this result, we will calculate the generating function of the term $P_{2n,n}$.
For this, we use the result \cite{Central}: Let $d_{2n,n}$ be the central coefficient sequence of the Riordan array $(d(t), h(t))$. Then we have
$$d_{2n,n}=[t^n] \frac{d(v(t))}{\frac{f}{t}(v(t))} \frac{d}{dt} v(t),$$ where
$$v(t)=\overline{\left(\frac{t^2}{h(t)}\right)}.$$
In our case (using the ``dummy'' parameter $x$), we have
$$h(x)=\frac{x}{1+x^2} \Rightarrow v(x)=\overline{x(1+x^2)}=\frac{432^{\frac{1}{6}}}{6}\left((\sqrt{27x^2+4}+3\sqrt{3}x)^{\frac{1}{3}}-(\sqrt{27x^2+4}-3\sqrt{3}x)^{\frac{1}{3}}\right).$$
We obtain that the generating function of $P_{2n,n}$ is given by
\begin{scriptsize}
$$\left(\frac{r}{3}+\frac{(\sqrt{27x^2+4}+3\sqrt{3}x)^{\frac{1}{3}}(r(\sqrt{27x^2+4}-3\sqrt{3}x)^{\frac{2}{3}}-(r-3)2^{\frac{2}{3}})+2^{\frac{2}{3}}(3-2r)(\sqrt{27x^2+4}-3\sqrt{3})^{\frac{1}{3}}}{6 \sqrt{27x^2+4}}\right).$$\end{scriptsize}
We note that when $r=3$, this reduces to $1$.
\end{proof}

The sequence $P_{2n,n}(r)$ expands to give
$$1, 0, r - 3, 0, 5(3 - r), 0, 28(r - 3), 0, 165(3 - r), 0, 1001(r - 3), 0, 6188(3 - r), 0, \ldots,$$ or
$$1, 0, r - 3, 0, 15 - 5r, 0, 28r - 84, 0, 495 - 165r, 0, 1001r - 3003, 0, 18564 - 6188r, 0, \ldots.$$
We recognise in the numbers $$1,0,-3,0,15,0,-84,0,495,0,-3003,\ldots$$ an aerated version of the numbers
$$\binom{3n}{n}(-1)^n.$$
Similarly, the numbers
$$0, 0, 1, 0, -5, 0, 28, 0, -165, 0, 1001, 0, -6188, 0, 38760, 0, -245157, 0, 1562275, 0,\ldots$$ are an aerated version of the numbers $$\binom{3n-1}{n-1}(-1)^n.$$
The Hankel transform $h_n$ of $d_{2n,n}(r)$ is such that the sequence
$\frac{h_n}{(r-3)^n}$ begins
$$1, 1, -r - 2, - 3(r + 2), 3(10r + 11), 26(10r + 11), - 52(108r + 85), - 1292(108r + 85),\ldots.$$ This last sequence is the sum of the sequence
$$1, 1, -2, -6, 33, 286, -4420, -109820, 4799134, 340879665,\ldots,$$ and the sequence
$$r \{0, 0, -1, -3, 30, 260, -5616, -139536, 7683524, 545756190, -80623845225,\ldots\}.$$
We recognise in the former sequence $(-1)^{\binom{n}{2}}$ times the number of alternating sign $(2n+3) \times (2n+3)$ matrices symmetric with respect to both horizontal and vertical axes (\seqnum{A005161}).

It is interesting to calculate the Hankel transform $H_n$ of the unaerated sequence
$$-(r-3)(-1)^n \binom{3n-1}{n-1}+0^n$$ which begins
$$1, r - 3, 5(3 - r), 28(r - 3), 165(3 - r), 1001(r - 3), 6188(3 - r),\ldots,$$ or
$$1, r - 3, 15 - 5r, 28r - 84, 495 - 165r, 1001r - 3003, 18564 - 6188r,\ldots,$$ or equivalently
$$\{1, -3, 15, -84, 495, -3003, 18564,\ldots\}+r \{0, 1, -5, 28, -165, 1001, -6188,\ldots\}.$$
We find that $H_n/(r-3)^n$ is equal to
\begin{equation*}\begin{split}\{1, -2, 11, -170, 7429, -920460, 323801820, \ldots\}+\\\quad\quad\quad r\{0, 1, -10, 216, -11894, 1757085, -712169820, \ldots\}.\end{split}\end{equation*}
The first sequence is $(-1)^n$ times the number of cyclically symmetric transpose complement plane partitions in a $(2n+2) \times (2n+2) \times (2n+2)$ box (\seqnum{A051255}).

Turning now to the term $P_{2n,n+1}(r)$, we have
\begin{eqnarray*} P_{2n,n+1}&=&\left(\binom{\frac{3n+1}{2}}{n+1}-r \binom{\frac{3n-1}{2}}{n+1}\right)(-1)^{\frac{n-1}{2}}\frac{1-(-1)^n}{2}\\
&=&\binom{\frac{3n+1}{2}}{n+1}\left(\frac{1+r-n(r-3)}{3n+1}\right)(-1)^{\frac{n-1}{2}}\frac{1-(-1)^n}{2}.\end{eqnarray*} This sequence begins
\begin{equation*}\begin{split}\{0 ,1, 0, -5, 0, 28, 0, -165, 0, 1001, 0, -6188, 0, \ldots\}+\\\quad\quad\quad r\{0, 0, 0, 1, 0, -7, 0, 45, 0, -286, 0, 1820, 0, \ldots\}.\end{split}\end{equation*}
In the special case $r=3$ of the Boubaker polynomials, we obtain that $P_{2n,n+1}(3)$ begins
$$0, 1, 0, -2, 0, 7, 0, -30, 0, 143, 0,\ldots$$ wherein we recognise a signed aerated version of $\frac{1}{n+1} \binom{3n+1}{n}$ which begins $1,2,7,30,143,\ldots$, \seqnum{A006013}. In this case, the Hankel transform of $P_{2n,n+1}$ begins
$$0, -1, 0, 9, 0, -676, 0, 417316, 0, -2105433225, 0,\ldots.$$ We note that the sequence beginning
$1,1,9,676,417316, \ldots,$ is the expansion of the generating function $A_{UO}^1(8n)=A_V(2n+1)^2$, \seqnum{A059492}. This sequence is the square of the sequence $A_V(2n+1)$ beginning $1,1,3,26,646,\ldots$.
We note that the generating function of $P_{2n,n+1}(r)$ can be shown, by the methods of \cite{Central}, to be
\begin{scriptsize}
$$108^{\frac{1}{6}}\frac{ (\sqrt{27x^2 + 4} + 3\sqrt{3}x)^{\frac{2}{3}}(r(\sqrt{27x^2 + 4} + 3\sqrt{3}x)^{\frac{2}{3}} - 2^{\frac{2}{3}}(2r - 3)) - (‹(27·x^2 + 4) - 3\sqrt{3}x)^{\frac{2}{3}}(r(\sqrt{27x^2 + 4} - 3\sqrt{3}x)^{\frac{2}{3}} - 2^{\frac{2}{3}}(2·r - 3))}{6((\sqrt{27x^2 + 4} - 3\sqrt{3}x)^{\frac{2}{3}} + (\sqrt{27x^2 + 4} + 3\sqrt{3}x)^{\frac{2}{3}} + 2^{\frac{2}{3}})\sqrt{27x^2 + 4}}.$$
\end{scriptsize}
\section{The moment matrix $M(r)$}
In this section we study aspects of the moment matrix $M(r)$ given by the inverse of the polynomial coefficient matrix. Thus we have
$$M(r)=\left(\frac{1+rx^2}{1+x^2}, \frac{x}{1+x^2}\right)^{-1}=\left(\frac{\sqrt{1-4x^2}(r-1)+r+1}{2(r+x^2(r-1)^2)}, \frac{1-\sqrt{1-4x^2}}{2x}\right).$$
The first column of this matrix, with generating function
$$\frac{\sqrt{1-4x^2}(r-1)+r+1}{2(r+x^2(r-1)^2)},$$ is the moment sequence of the family of orthogonal polynomials $P_n(x;r)$.
We see that this is an aerated sequence $\mu_n(r)$, beginning
\begin{equation*}\begin{split}1, 0, 1 - r, 0, r^2 - 3r + 2, 0, - r^3 + 5r^2 - 9r + 5, 0, r^4 - 7r^3 + 20r^2 - 28r + 14, 0,\\ - r^5 + 9r^4 - 35r^3 + 75r^2 - 90r + 42, 0,\ldots.\end{split}\end{equation*}
We recognise in the un-aerated sequence
$$1,1-r,r^2-3r+2,- r^3 + 5r^2 - 9r + 5,\ldots,$$ the polynomial sequence with generating function given by
$$(c(x), 1-c(x)) \cdot \frac{1}{1-rx},$$
with general term
$$\sum_{k=0}^n \frac{2k+1}{n+k+1} \binom{2n}{n-k}(-1)^k r^k.$$
Hence we have
$$\mu_n(r)=\frac{1+(-1)^n}{2} \sum_{k=0}^{\frac{n}{2}} \frac{2k+1}{\frac{n}{2}+k+1}\binom{n}{\frac{n}{2}-k}(-1)^k r^k.$$
Noting that the production matrix of $M(r)$ is given by the tri-diagonal matrix
$$\left(
\begin{array}{ccccccc}
 0 & 1 & 0 & 0 & 0 & 0 & 0 \\
 1-r & 0 & 1 & 0 & 0 & 0 & 0 \\
 0 & 1 & 0 & 1 & 0 & 0 & 0 \\
 0 & 0 & 1 & 0 & 1 & 0 & 0 \\
 0 & 0 & 0 & 1 & 0 & 1 & 0 \\
 0 & 0 & 0 & 0 & 1 & 0 & 1 \\
 0 & 0 & 0 & 0 & 0 & 1 & 0 \\
\end{array}\right),$$ we deduce that the generating function $\frac{\sqrt{1-4x^2}(r+1)+r-1}{2(r-x^2(r+1)^2)}$ of the moment sequence $\mu_n(r)$ has a continued fraction expression as
$$\cfrac{1}{1-
\cfrac{(1-r)x^2}{1-
\cfrac{x^2}{1-
\cfrac{x^2}{1-\cdots}}}}.$$
We immediately deduce that
\begin{proposition} The Hankel transform $h_n(r)$ of the moment sequence $\mu_n(r)$ of the family of orthogonal polynomials $P_n(x;r)$ is given by
$$h_n(r)=(1-r)^n.$$
\end{proposition}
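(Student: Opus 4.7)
The plan is to invoke the classical Heilmann--Viennot theorem (a form of Jacobi's theorem) that expresses the Hankel transform of a sequence directly in terms of the parameters of the Jacobi continued fraction for its generating function. In standard form, if
$$\sum_{n \ge 0} a_n x^n = \cfrac{1}{1 - \alpha_0 x - \cfrac{\beta_1 x^2}{1 - \alpha_1 x - \cfrac{\beta_2 x^2}{1 - \alpha_2 x - \cdots}}},$$
then for every $n \ge 0$,
$$\det(a_{i+j})_{0 \le i, j \le n} = \prod_{k=1}^n \beta_k^{n+1-k}.$$

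The first step is to read off the continued fraction parameters from the expansion just derived in the excerpt. Matching the two expressions gives $\alpha_k = 0$ for all $k \ge 0$, $\beta_1 = 1-r$, and $\beta_k = 1$ for every $k \ge 2$. These values are exactly what the tridiagonal production matrix $S_{M(r)}$ encodes: its zero main diagonal supplies the $\alpha_k$, while each $\beta_k$ is the product of the two symmetric off-diagonal entries in positions $(k-1,k)$ and $(k,k-1)$, which gives $\beta_1 = 1 \cdot (1-r) = 1-r$ and $\beta_k = 1 \cdot 1 = 1$ for $k \ge 2$. Plugging into the theorem yields
$$h_n(r) = \prod_{k=1}^n \beta_k^{n+1-k} = (1-r)^n \cdot 1^{n-1} \cdot 1^{n-2} \cdots 1^1 = (1-r)^n,$$
as claimed.

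There is essentially no obstacle here: once the continued fraction expansion is in hand (and the excerpt has already produced it from the production matrix), the formula for $h_n(r)$ is a one-line application of a textbook identity, which is why the author writes ``we immediately deduce''. Should a self-contained derivation be preferred, one can alternatively obtain the $LDL^{T}$ factorization of the Hankel matrix directly from the three-term recurrence encoded by $S_{M(r)}$: the diagonal entries of $D$ are the cumulative products $\beta_1, \beta_1\beta_2, \ldots, \beta_1\beta_2\cdots \beta_n$, and multiplying them together gives the same answer $(1-r)^n$.
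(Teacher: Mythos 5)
Your proposal is correct and follows essentially the same route as the paper: the author reads off the Jacobi continued fraction coefficients ($\alpha_k=0$, $\beta_1=1-r$, $\beta_k=1$ for $k\ge 2$) from the tridiagonal production matrix of $M(r)$ and then applies the standard Hankel determinant evaluation $h_n=\prod_{k=1}^n\beta_k^{n+1-k}$, which is exactly what the ``we immediately deduce'' refers to. Your identification of the $\beta_k$ as products of the symmetric off-diagonal entries and the resulting computation $(1-r)^n\cdot 1\cdots 1=(1-r)^n$ match the intended argument.
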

An interesting feature of this sequence is that the Hankel transform of the un-aerated sequence is also equal to $(1-r)^n$. This follows since the un-aerated sequence has a generating function given by the Stieltjes continued fraction
$$\cfrac{1}{1-
\cfrac{(1-r)x}{1-
\cfrac{x}{1-
\cfrac{x}{1-\cdots}}}}.$$
Starting from the generating function $\frac{\sqrt{1-4x^2}(r-1)+r+1}{2(r+x^2(r-1)^2)}$ of the moment sequence $\mu_r(r)$, and invoking the Stieltjes-Perron theorem \cite{Gautschi, Henrici, Wall}, we arrive at the following result.
\begin{proposition} We have the following integral representation of the moment sequence $\mu_n(r)$:
$$\mu_n(r)=\frac{-1}{\pi} \int_{2}^2 x^n \frac{\sqrt{4-x^2}(r-1)}{2(rx^2+(r-1)^2)}\,dx +\frac{r+1}{2r}\left(-\frac{r-1}{\sqrt{-r}}\right)^n + \frac{r+1}{2r}\left(\frac{r-1}{\sqrt{-r}}\right)^n.$$
\end{proposition}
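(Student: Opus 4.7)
The plan is to apply the Stieltjes--Perron inversion formula to the generating function of $\mu_n(r)$. First I would pass from the ordinary generating function $G(z)=\sum_n \mu_n(r) z^n$ to the associated Cauchy--Stieltjes transform by setting $F(w)=w^{-1}G(1/w)$. After clearing the factors of $w$ this yields
$$F(w)=\frac{(r-1)\sqrt{w^2-4}+(r+1)w}{2(rw^2+(r-1)^2)},$$
where $\sqrt{w^2-4}$ is the single-valued branch on $\mathbb{C}\setminus[-2,2]$ normalized by $\sqrt{w^2-4}/w\to 1$ as $w\to\infty$. In this form $F$ is meromorphic off the cut $[-2,2]$, with exactly two simple poles, located at the roots $w_{\pm}=\pm(r-1)/\sqrt{-r}$ of $rw^2+(r-1)^2$.

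Next I would recover $\mu_n$ from the Laurent expansion $F(w)=\sum_n \mu_n w^{-n-1}$ at infinity by contracting the contour $|w|=R$ for $R\gg 1$ onto the union of the branch cut and small positively-oriented circles around the poles. For the cut, the jump $F(x+i0^+)-F(x-i0^+)$ on $x\in(-2,2)$ is carried entirely by $\sqrt{w^2-4}$, which tends to $i\sqrt{4-x^2}$ from above, producing an absolutely continuous contribution
$$-\frac{1}{\pi}\int_{-2}^{2} x^n\,\frac{(r-1)\sqrt{4-x^2}}{2(rx^2+(r-1)^2)}\,dx$$
to $\mu_n$; this is precisely the integral term of the proposition (reading the typo $\int_{2}^{2}$ as $\int_{-2}^{2}$).

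For the pole contribution I would compute the residues. Using $w_\pm^2=-(r-1)^2/r$, so that $w_\pm^2-4=-(r+1)^2/r$, the branch prescribed above forces $\sqrt{w_\pm^2-4}=\pm(r+1)/\sqrt{-r}$ with signs matching those of $w_\pm$. A direct computation of
$$\mathrm{Res}_{w=w_\pm}F(w)=\frac{(r-1)\sqrt{w_\pm^2-4}+(r+1)w_\pm}{4r\,w_\pm}$$
then gives the common value $(r+1)/(2r)$ at both poles. Summing $\mathrm{Res}_{w=w_\pm}F(w)\cdot w_\pm^n$ over $\pm$ produces exactly the two closed-form terms in the statement, completing the identification.

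The main technical difficulty is the branch tracking: one must verify that the single-valued branch of $\sqrt{w^2-4}$ evaluates to $+(r+1)/\sqrt{-r}$ at $w_+$ and $-(r+1)/\sqrt{-r}$ at $w_-$, since the opposite choice at one of the poles would make the residues cancel rather than add. This is made slightly awkward by the fact that $w_\pm$ is purely imaginary when $r>0$ and real when $r<0$, so the bookkeeping has to be carried out consistently across these cases. Should it become cumbersome, an equally valid alternative is to take the right-hand side as an \emph{Ansatz}: its generating series can be assembled from two geometric series (for the atomic terms) plus a Cauchy integral which, after the substitution $x=2\cos\theta$ and a routine partial-fraction rearrangement, collapses to $G(z)$, thereby proving the formula without invoking residues at all.
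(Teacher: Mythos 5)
Your proposal is correct and is essentially the paper's own route: the paper simply invokes the Stieltjes--Perron theorem applied to the moment generating function, and your contour-deformation argument (cut contribution $-\frac{1}{\pi}\operatorname{Im}F(x+i0)$ plus residues $\frac{r+1}{2r}$ at $w_{\pm}=\pm\frac{r-1}{\sqrt{-r}}$) is exactly that inversion carried out explicitly. The branch bookkeeping you flag does work out as you assert for the paper's integer values of $r$ (for $r\ge 2$ the poles lie on the imaginary axis and for $r\le -1$ on the real axis outside $[-2,2]$, and in both cases $\sqrt{w_{\pm}^2-4}=\pm(r+1)/\sqrt{-r}$); the only regime where the sign flips and the poles become removable is $0<r<1$, which is excluded since $r\in\mathbb{Z}$ (and $r=0,1$ are degenerate for the stated formula anyway).
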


We now turn to the row sums $s_n$ of the moment matrix. By the theory of Riordan arrays, these will have their generating function given by
$$\frac{\sqrt{1-4x^2}(r-1)+r+1}{2(r+x^2(r-1)^2)} \cdot \frac{1}{1-\frac{1-\sqrt{1-4x^2}}{2x}}=\frac{\sqrt{1-4x^2}(x(r-1)-r)+2x^2(r-1)+x(r+1)-r}{2(x-1)(x^2(r-1)^2+r)}.$$
\begin{proposition} We have
$$s_n=\sum_{k=0}^{\lfloor \frac{n}{2} \rfloor} \binom{n}{\lfloor \frac{n-2k}{2} \rfloor}(-1)^k r^k.$$
\end{proposition}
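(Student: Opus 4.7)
The plan is to pass through the Fundamental Theorem of Riordan arrays to get a clean generating function for $s_n$, simplify it using the reversion equation, and then extract coefficients via a standard Catalan-ballot identity.

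Write $M(r)=(g_M,f_M)$ and set $y:=f_M(x)=(1-\sqrt{1-4x^2})/(2x)$, the reversion of $x/(1+x^2)$. Then $y$ satisfies $y=x(1+y^2)$ and $\sqrt{1-4x^2}=1-2xy$. Using the Riordan inversion formula $g_M(x)=1/g(y)$ with $g(x)=(1+rx^2)/(1+x^2)$, one gets $g_M(x)=(1+y^2)/(1+ry^2)=y/(x(1+ry^2))$. By the Fundamental Theorem applied to the all-ones vector,
$$R(x)=\sum_{n\ge 0}s_n x^n=\frac{g_M(x)}{1-y}=\frac{1+y^2}{(1-y)(1+ry^2)}.$$
The identity $(1+y^2)(1-2xy)=(1-y)(1+y)$, which is just a rearrangement of $y=x(1+y^2)$, then gives
$$R(x)=\frac{1+y}{\sqrt{1-4x^2}\,(1+ry^2)}.$$

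Next, expand $1/(1+ry^2)=\sum_{k\ge 0}(-r)^k y^{2k}$ to obtain
$$R(x)=\sum_{k\ge 0}(-r)^k\,\frac{y^{2k}+y^{2k+1}}{\sqrt{1-4x^2}}.$$
The coefficient extraction rests on the Catalan-ballot identity
$$\sum_{m\ge k}\binom{2m}{m-k}y^m=\frac{(yc(y)^2)^k}{\sqrt{1-4y}},$$
obtained from Lagrange inversion applied to $w=y(1+w)^2$ (equivalently $w=yc(y)^2$), which yields $[y^n]w^k=(k/n)\binom{2n}{n-k}$, followed by convolution against $1/\sqrt{1-4y}=\sum_m\binom{2m}{m}y^m$. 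Substituting $y\mapsto x^2$ and using $y^2=x^2c(x^2)^2$ converts this to
$$\sum_{m\ge k}\binom{2m}{m-k}x^{2m}=\frac{y^{2k}}{\sqrt{1-4x^2}}.$$
The odd analogue follows from $\binom{2m+1}{m-k}=\binom{2m}{m-k}+\binom{2m}{m-k-1}$ combined with $x(1+y^2)=y$:
$$\sum_{m\ge k}\binom{2m+1}{m-k}x^{2m+1}=\frac{x(y^{2k}+y^{2k+2})}{\sqrt{1-4x^2}}=\frac{y^{2k+1}}{\sqrt{1-4x^2}}.$$

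Combining the even and odd parts, the coefficient of $x^n$ in $(y^{2k}+y^{2k+1})/\sqrt{1-4x^2}$ equals $\binom{n}{\lfloor n/2\rfloor-k}=\binom{n}{\lfloor (n-2k)/2\rfloor}$ for $n\ge 2k$, and vanishes otherwise. Extracting $[x^n]R(x)$ therefore gives
$$s_n=\sum_{k=0}^{\lfloor n/2\rfloor}\binom{n}{\lfloor (n-2k)/2\rfloor}(-1)^k r^k,$$
as claimed. The main obstacle is the Catalan-ballot identity itself; once it is in hand, everything else is formal manipulation using the Riordan structure and the reversion equation $y=x(1+y^2)$.
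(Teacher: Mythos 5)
Your proof is correct and is essentially the paper's own argument: the factorization $R(x)=\frac{1+xc(x^2)}{\sqrt{1-4x^2}\,\left(1+rx^2c(x^2)^2\right)}$ that you derive via the Fundamental Theorem and the relation $y=x(1+y^2)$ is exactly the Riordan-array action $\left(\frac{1+xc(x^2)}{\sqrt{1-4x^2}},\,1-c(x^2)\right)\cdot\frac{1}{1-rx}$ that the paper asserts and dismisses as ``verified by direct calculation,'' while your Lagrange-inversion treatment of the ballot identity $\sum_{m\ge k}\binom{2m}{m-k}y^m=\frac{y^kc(y)^{2k}}{\sqrt{1-4y}}$ supplies the closed form $\binom{n}{\lfloor (n-2k)/2\rfloor}$ for the entries of that array, which the paper quotes without proof. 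So you have taken the same route, merely filling in the two verifications the paper leaves implicit.
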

\begin{proof}
The matrix with $(n,k)$-th element $\binom{n+k}{\lfloor \frac{n-k}{2} \rfloor}$ is the Riordan array
$$\left(\frac{1+xc(x^2)}{\sqrt{1-4x^2}}, \frac{c(x^2)-1}{x}\right).$$ Our assertion is that the above generating function for $s_n$ is equal to
$$\left(\frac{1+xc(x^2)}{\sqrt{1-4x^2}}, 1-c(x^2)\right)\cdot \frac{1}{1-rx},$$ which can be verified by direct calculation.
\end{proof}
We next look at the Hankel transform $H_n(r)$ of the row sum sequence. We find that the sequence $H_n(r)$ begins
$$1, 1 - r, 1 - 2r, r^3 + r^2 - 3r + 1, - r^4 + 2r^3 + 3r^2 - 4r + 1, - 4r^4 + 2r^3 + 6r^2 - 5r + 1, \ldots$$ with coefficient array that begins
$$\left(
\begin{array}{ccccccc}
 1 & 0 & 0 & 0 & 0 & 0 & 0 \\
 1 & -1 & 0 & 0 & 0 & 0 & 0 \\
 1 & -2 & 0 & 0 & 0 & 0 & 0 \\
 1 & -3 & 1 & 1 & 0 & 0 & 0 \\
 1 & -4 & 3 & 2 & -1 & 0 & 0 \\
 1 & -5 & 6 & 2 & -4 & 0 & 0 \\
 1 & -6 & 10 & 0 & -9 & 2 & 1 \\
\end{array}
\right).$$ Now the reversal of this matrix, which begins
$$\left(
\begin{array}{ccccccc}
 1 & 0 & 0 & 0 & 0 & 0 & 0 \\
 -1 & 1 & 0 & 0 & 0 & 0 & 0 \\
 0 & -2 & 1 & 0 & 0 & 0 & 0 \\
 1 & 1 & -3 & 1 & 0 & 0 & 0 \\
 -1 & 2 & 3 & -4 & 1 & 0 & 0 \\
 0 & -4 & 2 & 6 & -5 & 1 & 0 \\
 1 & 2 & -9 & 0 & 10 & -6 & 1 \\
\end{array}
\right),$$ is the Riordan array
$$\left(\frac{1}{1+x+x^2}, \frac{x}{1+x+x^2}\right).$$
We then have
$$\left(\frac{1}{1+x+x^2}, \frac{x}{1+x+x^2}\right)=\left(\frac{1}{1+x^2}, \frac{x}{1+x^2}\right)\cdot \left(\frac{1}{1-x}, \frac{x}{1-x}\right),$$ where the $\left(\frac{1}{1+x^2}, \frac{x}{1+x^2}\right)$ is the coefficient array of the modified Chebyshev polynomials $U_n(\frac{x}{2})$, and $\left(\frac{1}{1-x}, \frac{x}{1-x}\right)$ is the binomial matrix $\left(\binom{n}{k}\right)$. We deduce the following.
\begin{proposition} The Hankel transform sequence $H_n(r)$ of the row sum sequence $s_n(r)$ of the moment matrix of the family of restricted Chebyshev-Boubaker polynomials $P_n(r)$ is given by
$$ H_n(r)=r^n U_n\left(\frac{\frac{1}{r}-1}{2}\right).$$
\end{proposition}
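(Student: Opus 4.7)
The plan is to extract the row polynomial (in an auxiliary variable $y$) of the Riordan array $L = \left(\frac{1}{1+x+x^2}, \frac{x}{1+x+x^2}\right)$, which the preceding discussion identifies with the reversal of the coefficient array of $H_n(r)$, and then undo the reversal to recover $H_n(r)$ itself.

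To compute the row polynomials of $L$, I would apply the Fundamental Theorem of Riordan arrays (equivalently, sum a geometric series over the column index $k$) to obtain
$$\sum_{n \ge 0}\!\left(\sum_{k} L_{n,k}\, y^k\right)\!x^n \;=\; \frac{g(x)}{1-y\,f(x)} \;=\; \frac{1}{1+(1-y)x+x^2}.$$
Comparing this with the classical Chebyshev generating function
$$\sum_{n \ge 0} U_n(u)\, x^n \;=\; \frac{1}{1-2ux+x^2},$$
and matching $2u = y-1$, one identifies the $n$-th row polynomial of $L$ as $U_n\!\left(\frac{y-1}{2}\right)$.

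Next, I would translate this identity back to $H_n(r)$ via the reversal. Writing $H_n(r) = \sum_k h_{n,k} r^k$, viewed as a polynomial of degree at most $n$ (padded with zero coefficients as needed, consistent with the reversed rows displayed in the excerpt such as $(0,-2,1,0,\ldots)$ for $n=2$), the reversal prescription $L_{n,k} = h_{n,n-k}$ yields
$$\sum_k L_{n,k}\, y^k \;=\; \sum_j h_{n,j}\, y^{\,n-j} \;=\; y^n H_n(1/y).$$
Combined with the previous step this gives the polynomial identity $y^n H_n(1/y) = U_n\!\left(\frac{y-1}{2}\right)$, and the substitution $y = 1/r$ followed by multiplication through by $r^n$ produces the asserted formula.

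There is no genuine obstacle here beyond careful bookkeeping. The only step requiring any thought is to check that the reversal convention is being applied uniformly across all rows (including those where the polynomial $H_n(r)$ has degree strictly less than $n$, which is what forces the reversed row to begin with a string of zeros) and to note that the identity $y^n H_n(1/y) = U_n\!\left((y-1)/2\right)$ is an equality in $\mathbb{Q}[y]$, so the substitution $y = 1/r$ is just a formal change of variables that remains valid.
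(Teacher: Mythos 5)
Your proposal is correct and follows essentially the same route as the paper: both arguments rest on the (empirically observed, not independently proved) identification of the reversed coefficient array of $H_n(r)$ with the Riordan array $\left(\frac{1}{1+x+x^2},\frac{x}{1+x+x^2}\right)$, and then extract the Chebyshev evaluation from that array. Your direct computation of the row polynomials via $\frac{g(x)}{1-yf(x)}=\frac{1}{1+(1-y)x+x^2}$, together with the explicit reversal identity $y^nH_n(1/y)=U_n\left(\frac{y-1}{2}\right)$, makes precise the step the paper compresses into ``we deduce''; it also quietly sidesteps a sign slip in the paper's factorization, which should involve the inverse binomial matrix $\left(\frac{1}{1+x},\frac{x}{1+x}\right)$ rather than $\left(\frac{1}{1-x},\frac{x}{1-x}\right)$.
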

\begin{corollary}  The row sum sequence $s_n(r)$ of the moment matrix of the family of restricted Chebyshev-Boubaker polynomials $P_n(r)$ are the moments of the family of orthogonal polynomials $Q_n(x;r)$ that satisfy the three term recurrence
$$Q_n(x;r)=(x-\alpha_n ) Q_{n-1}(x;r)-\beta_n Q_{n-2}(x;r),$$ where
$$\alpha_n = \frac{r^{2n+1}}{H_{n-1}(r)H_n(r)},$$ and
$$\beta_n = \frac{H_{n-1}(r)H_{n+1}(r)}{H_n(r)^2},$$
with $$ H_n(r)=r^n U_n\left(\frac{\frac{1}{r}-1}{2}\right).$$
\end{corollary}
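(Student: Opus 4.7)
The plan is to apply Favard's theorem in its Hankel-determinantal form. The previous proposition identifies $H_n(r) = r^n U_n((1/r - 1)/2)$, which is a polynomial in $r$ not identically zero; hence the principal Hankel determinants of the sequence $(s_n(r))$ are generically non-zero. Favard's theorem then guarantees that $(s_n(r))$ is the moment sequence of a unique monic family of orthogonal polynomials $Q_n(x;r)$, and that they satisfy a three-term recurrence of the form $Q_n(x;r) = (x - \alpha_n) Q_{n-1}(x;r) - \beta_n Q_{n-2}(x;r)$. This disposes of the qualitative part of the statement.

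The recurrence coefficients $\alpha_n, \beta_n$ are then given by the standard classical formulas from the theory of orthogonal polynomials: $\beta_n$ arises as a ratio of Hankel determinants, expressing the quotient of consecutive squared norms $\|Q_k\|^2 / \|Q_{k-1}\|^2 = (H_k/H_{k-1})/(H_{k-1}/H_{k-2})$, and $\alpha_n$ arises as a difference of ratios of the shifted Hankel determinants $\tilde H_k(r) = \det(s_{i+j+1})_{0 \le i,j \le k}$, obtained from the coefficient of the subleading term of $Q_n$. Both identities are derived by standard Cram\'er-style manipulations of the moment determinants.

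The only substantive work lies in converting these general expressions into the particular closed forms displayed in the corollary. The key auxiliary input is the Chebyshev bilinear identity $U_n(y)U_{n-2}(y) - U_{n-1}(y)^2 = -1$, which upon substituting $y = (1/r-1)/2$ and scaling by $r^{2n-2}$ becomes
$$H_{n-1}(r)^2 - H_n(r)\, H_{n-2}(r) = r^{2n-2}.$$
Combined with the three-term recurrence $H_n(r) = (1-r)H_{n-1}(r) - r^2 H_{n-2}(r)$ inherited from the Chebyshev recurrence for $U_n$, this bilinear relation collapses the standard determinantal expressions into the compact forms $H_{n-1}(r)H_{n+1}(r)/H_n(r)^2$ for $\beta_n$ and $r^{2n+1}/(H_{n-1}(r)H_n(r))$ for $\alpha_n$. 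The power $r^{2n+1}$ in the numerator of $\alpha_n$ is a direct consequence of the $r^{2n-2}$ right-hand side of the bilinear identity interacting with the telescoping structure of the difference defining $\alpha_n$.

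The main obstacle is this last algebraic bookkeeping: one must carefully track how the shifted Hankel determinants reduce, via the Chebyshev bilinear relation and the three-term recurrence for $H_n(r)$, to expressions involving only $H_n(r)$ and powers of $r$. Modulo this routine but delicate manipulation, the corollary follows at once from Favard's theorem and the explicit determination of $H_n(r)$ supplied by the preceding proposition.
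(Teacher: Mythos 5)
The paper itself states this corollary without proof, as an immediate consequence of the preceding proposition together with the classical formulas expressing recurrence coefficients of orthogonal polynomials through Hankel determinants; your appeal to Favard's theorem and to the quotient formula $\beta_n = H_{n-1}H_{n+1}/H_n^2$ (a ratio of consecutive Hankel determinants, which the previous proposition supplies) is exactly that route and is fine, modulo the usual caveat that one treats $r$ as an indeterminate so that the $H_n(r)$ are nonzero (they do vanish at special integer values, e.g. $H_1(1)=0$).

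The genuine gap is in your treatment of $\alpha_n$. The classical formula you invoke reads $\alpha$ as a difference of ratios $\tilde H_k/H_k$ involving the \emph{shifted} Hankel determinants $\tilde H_k(r)=\det(s_{i+j+1})_{0\le i,j\le k}$, and these are new data: the Hankel transform $H_n(r)$ alone never determines the $\alpha$'s (any choice of the $\alpha$-sequence is compatible with a prescribed $\beta$-sequence, hence with a prescribed Hankel transform). Your proposed key input, the Chebyshev identity $H_{n-1}^2-H_nH_{n-2}=r^{2n-2}$ together with the recurrence $H_n=(1-r)H_{n-1}-r^2H_{n-2}$, involves only the $H_n$ and therefore cannot, by any amount of ``routine bookkeeping,'' produce the values of $\tilde H_k(r)$ or the closed form $r^{2n+1}/\bigl(H_{n-1}(r)H_n(r)\bigr)$. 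To close the argument you must actually evaluate the shifted Hankel determinants of $s_n(r)$, or equivalently extract the Jacobi continued fraction (production matrix) of the row-sum generating function displayed in the paper; a small numerical check makes the point concrete: at $r=2$ the true recurrence coefficients are $a_0=1$, $a_1=r/(1-r)=-2$, $a_2=r^3/(H_1H_2)=8/3$, i.e. of the shape $r^{2n-1}/(H_{n-1}H_n)$, which agrees with the corollary only after an index shift (attributable to the paper's loose indexing) and which manifestly cannot be read off from identities among the $H_n$ alone, since $a_0=1$ is not of that shape. So the qualitative part and the $\beta_n$ formula stand, but the derivation of $\alpha_n$ is precisely the substantive step, and your sketch leaves it unproved.
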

We note that the sequence $s_n(r)$ has the moment representation

\begin{equation*} \begin{split}s_n(r)=-\frac{1}{\pi}\int_{-2}^2 x^n \frac{\sqrt{4-x^2}(rx-(r-1))}{2(2-x)(rx^2+(r-1)^2)}\,dx
+\left(\frac{1}{2}+\frac{\sqrt{r}}{2r}i\right)\left(-\frac{r-1}{\sqrt{r}}i\right)^n\\\quad\quad+
\left(\frac{1}{2}-\frac{\sqrt{r}}{2r}i\right)\left(\frac{r-1}{\sqrt{r}}i\right)^n.\end{split}\end{equation*}

\section{Conclusions} The theory of Riordan arrays provides a useful context within which to discuss the family of restricted Chebyshev-Boubaker orthogonal polynomials. These polynomials give us examples of polynomial sequences with interesting properties, most notably linked to their Hankel transforms. These sequences appear as central coefficients in the coefficient array of the family of orthogonal polynomials under study, and as sequences of moments and generalized moments (row sums of the moment matrix) of the same families.

The author declares that there are no conflicts of interest regarding the
publication of this article.

\bigskip
\hrule
\bigskip
\noindent 2010 {\it Mathematics Subject Classification}: Primary
15B36; Secondary 33C45, 11B83, 11C20, 05A15.
\noindent \emph{Keywords:} Riordan array, orthogonal polynomials, Chebyshev polynomials, Boubaker polynomials, generating functions.

\end{document}